\newtheorem{teorema}{Theorem}
\newtheorem{prop}[teorema]{Proposition}
\newtheorem{lema}[teorema]{Lemma}
\newtheorem{rem}[teorema]{Remark}
\newcommand{\field}[1]{\ensuremath{\mathbb{#1}}}
\newcommand{\R}{\field{R}}
\begin{document}
\begin{center}
\Large{On Unfoldings of Stretched Polyhedra}\\
\normalsize{G\"ozde Sert, Sergio Zamora}\\
Penn State University
\end{center}

\begin{abstract}
\noindent We give a short proof of a result obtained by Mohammad Ghomi concerning existence of nets of a convex polyhedron after a suitable linear transformation.

\end{abstract}

\section*{Introduction}

A \textit{net of a polyhedron} is an arrangement of edge-joined polygons in the plane which can be folded along its edges to become the faces of the polyhedron. The first known record of this procedure is the renaissance book \textit{Instructions for Measuring with Compass and Ruler} by Albrecht D\"{u}rer \cite{Du}. In this book, D\"{u}rer shows how to cut and develop some figures, including all five regular polyhedra.  In 1975 Geoffrey Shephard \cite{Sh} posed the problem to determine whether all convex polyhedra have a net.


\begin{figure}[h]
\centering
\includegraphics[scale=0.862]{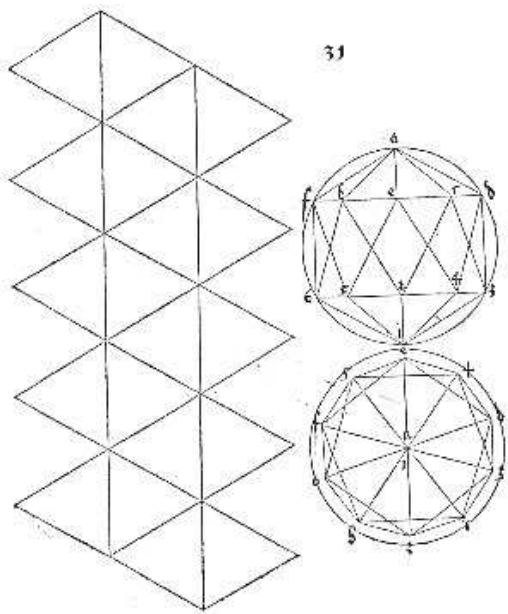}
\caption{Albrecht D\"{u}rer net of the icosahedron.}
\end{figure}

Recently Mohammad Ghomi \cite{Gh} proved the existence of a net for any polyhedron after a suitable stretching in one direction (theorem \ref{Str} below). In particular, every polyhedron is affinely equivalent to one with a net, and having a net does not depend on the combinatorial structure of the polyhedron. In this note we give an elementary proof of that result based on the arm lemma, which allows us to considerably shorten it and make it less technical.

\begin{teorema}\label{Str}
{Let $P$ be a convex polyhedron, then there is a linear stretching $L$, such that $L(P)$ has a net.}
\end{teorema}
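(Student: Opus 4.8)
The plan is to place the stretching direction along the vertical axis, so $L_\lambda(x,y,z)=(x,y,\lambda z)$ with $\lambda\ge 1$, and to produce a net of $L_\lambda(P)$ for all sufficiently large $\lambda$. First I would apply a generic rotation to $P$ so that the heights of the vertices are pairwise distinct; then $L_\lambda(P)$ has a unique top vertex and a unique bottom vertex, no horizontal edge, and no horizontal face. Every face then has an outward normal with nonzero vertical component, so the faces split into an \emph{upper cap} $P^+$ (normals pointing up) and a \emph{lower cap} $P^-$ (normals pointing down). By convexity each cap is a topological disk, and the two caps meet along a single closed edge-cycle $\sigma$, the silhouette, which projects onto the boundary of the vertical shadow of $P$. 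The goal is then to develop $P^+$ and $P^-$ into the plane, each without self-overlap, and to glue the two developments along $\sigma$ into one embedded figure.

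To unfold a single cap, say $P^+$, I would cut a spanning tree of its edges running from $\sigma$ up to the top vertex and develop the remaining faces. The faces fan out around the developed apex, and $\sigma$ becomes an open polygonal chain $\tilde\sigma$. The key claim is that $\tilde\sigma$ is convex and does not wind around the apex, so that the development of $P^+$ is embedded. This is where the arm lemma enters: because $P$ is convex, the angular defect at every interior vertex of the cap is positive, so developing the cap can only \emph{open} the angles of the boundary chain; the arm lemma then controls how the endpoints of $\tilde\sigma$ spread apart and keeps $\tilde\sigma$ convex and simple, yielding injectivity of the development. The same argument applies to $P^-$.

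The role of the stretching is to make the two developments thin enough to be glued without collision. Intuitively, as $\lambda\to\infty$ the caps become needle-like: each face becomes nearly vertical, so in the development the faces stack into a long thin region rather than splaying out widely, and the angle that each cap subtends at its apex tends to $0$. I would then attach the two thin developments along a lift of $\sigma$ so that they emanate in (nearly) opposite directions, and use their thinness together with the convexity of $\tilde\sigma$ furnished by the arm lemma to conclude that their union is embedded. This produces a net of $L_\lambda(P)$.

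I expect the gluing step to be the main obstacle. Showing that each cap develops injectively is already the heart of the matter, and the hardest point is that a general cap is not a cone over $\sigma$, so the faces away from the apex can splay sideways; controlling this uniformly as $\lambda$ grows, and then ruling out overlaps between the upper and lower developments near their common boundary, is exactly where the quantitative arm-lemma estimates must do the work. A secondary technical point is the topological lemma that the silhouette is a single convex cycle bounding two disks, which relies on the generic-direction normalization from the first step. In short, the arm lemma is the device that converts convexity of $P$ into convexity of the developed boundary chains, and the rest of the argument is arranged so that this estimate, combined with the thinness forced by large $\lambda$, forces global embeddedness.
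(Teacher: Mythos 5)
Your proposal rests on a ``key claim'' --- that a convex cap, cut along a tree running to its apex, develops into the plane with embedded image and convex boundary chain --- and this claim is both unproven and essentially false as stated; it is Shephard's problem all over again, restricted to caps, which is a notoriously hard subproblem rather than a lemma. The justification you sketch is the classical trap: Cauchy-type arm lemmas require the chains being compared to be \emph{convex} (all angles at most $\pi$), while the arm lemma used in this paper requires instead that all segments be almost parallel to a fixed direction with monotonically ordered arguments. Your cap boundary satisfies neither hypothesis, and in your own normalization the failure is stark: stretching by $L_\lambda(x,y,z)=(x,y,\lambda z)$ makes \emph{every} edge of $L_\lambda(P)$ nearly vertical (after the generic rotation no edge is horizontal), so the silhouette $\sigma$, being a closed cycle of such edges separating the top vertex from the bottom one, is forced to ascend and descend alternately --- it is a zigzag. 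Its development is therefore a long, non-convex, non-monotone chain, not a convex arc ``fanning around the apex,'' and positive curvature at interior vertices gives no control: angle-opening arguments break down precisely when the chain can be non-convex, which is why the general problem is open.

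The gluing step is also geometrically impossible as described. At each vertex of $\sigma$ the two caps see complementary portions of the total cone angle, so the developed images of $\sigma$ inside the upper and lower unfoldings are \emph{incongruent} polygonal chains; there is no ``attaching the two developments along a lift of $\sigma$.'' A valid edge unfolding cuts along a spanning tree of the edge graph, and since $\sigma$ is a cycle, the cut locus must contain all of $\sigma$ except one edge; hence the two cap developments can be joined across a single edge only, leaving you with the unaddressed task of showing that two large developed regions hinged at one edge never collide --- a problem of the same order as the original, which your thinness heuristic does not touch. Contrast this with the paper's route: the stretch is used to make all edges nearly parallel to the $x$-axis, the cut tree is chosen \emph{monotone} (increasing) in that direction, so the boundary of the unfolding becomes an alternating sequence of monotone, almost-horizontal broken lines to which the arm lemma genuinely applies, and injectivity then follows from a winding-number lemma. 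No cap decomposition is needed, and introducing one destroys exactly the monotonicity that makes the arm lemma usable.
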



\section*{Preliminaries}

Let $a, b, c$ be vertices of a convex polyhedron such that $ab $ and $ac$ are edges. We will denote by $\measuredangle bac $ the intrinsic angle at $a $ from $ab$ to $ac$ measured counterclockwise. For distinct $x,y,z \in \R^2$, $arg (x)$ will denote the argument from $-\pi$ to $\pi$ of $x$ as a complex number, and $\angle yxz $ will denote the angle at $x$ measured counterclockwise.

\begin{rem}
{\rm Since the polyhedron is convex, for any $a,b,c$ vertices such that $ab$ and $ac$ are edges, we have
\begin{equation}\label{Alex}
\measuredangle bac + \measuredangle  cab  < 2\pi.
\end{equation} 
}
 \end{rem}

\begin{lema} \label{Arm}
{\textbf{(Arm Lemma)} Suppose $\{ u_0, u_1, \ldots , u_m     \}, \{ v_0, v_1, \ldots , v_m      \} \subset \R^2$ satisfy
\begin{itemize}
\item $u_0 = v_0$.
\item $\vert u_j - u_{j-1} \vert = \vert v_j - v_{j-1}  \vert$ for $j=1,2, \ldots, m$.
\item $u_{j }- u_{j-1} $ and $v_j - v_{j-1} $ are almost horizontal for $j = 1, 2, \ldots , m$ (their argument is in $\left( -\frac{\pi}{10},\frac{\pi}{10} \right)$ ).
\item $arg (v_{i} - v_{i-1} )\geq arg (u_{i} - u_{i-1})$ for $i \in \{ 1, 2, \ldots, m \} $.
\end{itemize}

Then the broken lines determined by $\{ u_0, u_1, \ldots , u_m     \}$ and $ \{ v_0, v_1, \ldots , v_m      \}$ do not cross (see figure \ref{arml}) and $v_m - u_m$ is almost vertical (its argument lies in the interval $\left(  \frac{\pi}{2} - \frac{\pi}{10} , \frac{\pi}{2} + \frac{\pi}{10}  \right)$ ) 
}
\end{lema}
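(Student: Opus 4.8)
The plan is to identify $\R^2$ with $\C$ and reduce both assertions to a single computation about the ``defect vectors'' $v_k-u_k$. Writing $\ell_j=|u_j-u_{j-1}|=|v_j-v_{j-1}|$, $\alpha_j=\arg(u_j-u_{j-1})$ and $\beta_j=\arg(v_j-v_{j-1})$, the hypotheses read $\alpha_j,\beta_j\in(-\pi/10,\pi/10)$ and $\beta_j\ge\alpha_j$, and since $u_0=v_0$ we have $v_k-u_k=\sum_{j=1}^k\ell_j\,(e^{i\beta_j}-e^{i\alpha_j})$. The one elementary fact I will use repeatedly is that if finitely many vectors (or a continuous family, integrated) all have argument in a common arc of length $<\pi$, then so does their sum, and the sum is nonzero as soon as one summand is. The point is that each summand above is almost vertical: from $e^{i\beta}-e^{i\alpha}=2\sin\!\big(\tfrac{\beta-\alpha}{2}\big)\,e^{i(\pi/2+(\alpha+\beta)/2)}$ and $\beta_j\ge\alpha_j$, the vector $e^{i\beta_j}-e^{i\alpha_j}$ has nonnegative length and argument $\pi/2+(\alpha_j+\beta_j)/2$, which lies in $(\pi/2-\pi/10,\pi/2+\pi/10)$ because $(\alpha_j+\beta_j)/2\in(-\pi/10,\pi/10)$.

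The ``almost vertical'' conclusion is then immediate: if $v_m=u_m$ there is nothing to prove, and otherwise $v_m-u_m=\sum_{j=1}^m\ell_j(e^{i\beta_j}-e^{i\alpha_j})$ is a sum of vectors all having argument in the open arc $(\pi/2-\pi/10,\pi/2+\pi/10)$ (of length $\pi/5<\pi$), so by the fact above its argument lies in that same arc.

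For non-crossing, the first observation is that every edge has argument in $(-\pi/10,\pi/10)$ and hence strictly positive real part, so each path is strictly monotone in the horizontal coordinate; in particular each is a graph. The naive idea of comparing the two heights over a common vertical line fails, because the two paths reach a given horizontal coordinate along edges of different index, and there is no pointwise slope comparison. The remedy is to compare over a common arc length, which is legitimate precisely because corresponding edges have equal lengths: parametrizing both paths by arc length $s\in[0,L]$, $L=\sum_j\ell_j$, at every $s$ the two points $U(s),V(s)$ lie on edges of the same index, so $V(s)-U(s)=\int_0^s\big(e^{i\beta(t)}-e^{i\alpha(t)}\big)\,dt$ lies in the closed almost-vertical cone by the integral form of the fact above.

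Now suppose $p$ is any point common to both paths, say $p=U(s_1)=V(s_2)$. Applying the previous display at $s_2$ gives $p-U(s_2)=V(s_2)-U(s_2)$, which is almost vertical; on the other hand $p-U(s_2)=U(s_1)-U(s_2)$ is a chord of the $u$-path, hence an integral of unit vectors with arguments in $(-\pi/10,\pi/10)$ and therefore almost horizontal. Since the almost-vertical and almost-horizontal cones meet only at the origin (as $\pi/5<\pi/2$), we get $U(s_1)=U(s_2)$, whence $s_1=s_2$ by horizontal monotonicity and $V(s_1)=U(s_1)$. Thus every common point is one where the two paths actually coincide, i.e.\ it lies on their maximal shared initial sub-arc; beyond that sub-arc the paths never meet again, so they do not cross. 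I expect this last step — ruling out a genuine (transversal) crossing rather than merely bounding heights — to be the main obstacle, and the key to it is playing the near-vertical defect cone against the near-horizontal chord cone.
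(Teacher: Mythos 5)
Your proof is correct, and it takes a genuinely different route from the paper's. The paper argues by induction on $m$: it translates the arm $u_1,\ldots,u_m$ by the vector $v_1-u_1$ (the parallelogram construction $w_j$), applies the induction hypothesis to the translated arm against $v_1,\ldots,v_m$, and then adds the two almost-vertical defects $v_m-w_m$ and $w_m-u_m=v_1-u_1$; the base case $m=1$ is left as elementary plane geometry. You avoid induction entirely: the identity $e^{i\beta}-e^{i\alpha}=2\sin\bigl(\tfrac{\beta-\alpha}{2}\bigr)e^{i(\pi/2+(\alpha+\beta)/2)}$ makes every elementary defect almost vertical, the salient-cone fact (a sum of vectors with arguments in a common arc of length less than $\pi$ stays in that arc, and vanishes only if every summand does) gives the endpoint conclusion at once, and the equal-arc-length parametrization --- legitimate precisely because corresponding edges have equal lengths --- reduces non-crossing to the observation that a common point would force an almost-vertical defect to equal an almost-horizontal chord, which happens only for the zero vector. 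Your argument actually yields slightly more than the statement: the defect $V(s)-U(s)$ is almost vertical for \emph{every} $s$, not just at the endpoints, and the intersection of the two arms is exactly a shared initial sub-arc. These facts are only implicit in the paper's induction; indeed, the paper's inductive step explicitly verifies only that the $v$-arm avoids the auxiliary arm $w$, leaving the passage back to the $u$-arm to the reader, whereas your cone argument needs no intermediate object. What each approach buys: the paper's is shorter on the page and pictorial; yours is quantitative and self-contained, with both conclusions falling out of a single computation (the one shared blemish being the degenerate case $u_m=v_m$, where the ``argument'' of the zero vector is undefined --- your reading ``nothing to prove'' is the sensible one, and the paper's proof has the same issue).
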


\begin{figure}[h]
\centering
\includegraphics[scale=0.45]{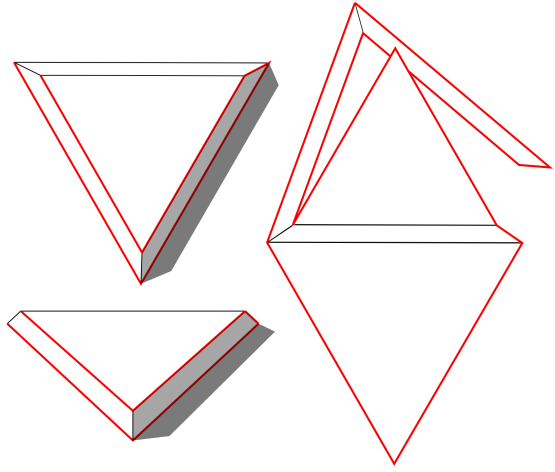}
\caption{The Arm Lemma does not hold if we remove the condition of $u_{j }- u_{j-1}$ and $ v_j - v_{j-1} $ being almost horizontal. The stretching is applied to meet this condition. }\label{Tet}
\end{figure}

\begin{figure}
\centering 
\psfrag{A}{$u_0$}
\psfrag{B}{$u_1$}
\psfrag{C}{$u_2$}
\psfrag{D}{$u_3$}
\psfrag{E}{$u_4$}
\psfrag{F}{$v_1$}
\psfrag{G}{$v_2$}
\psfrag{H}{$v_3$}
\psfrag{I}{$v_4$}
\psfrag{J}{$w_2$}
\psfrag{K}{$w_3$}
\psfrag{L}{$w_4$}
\includegraphics[scale=0.95]{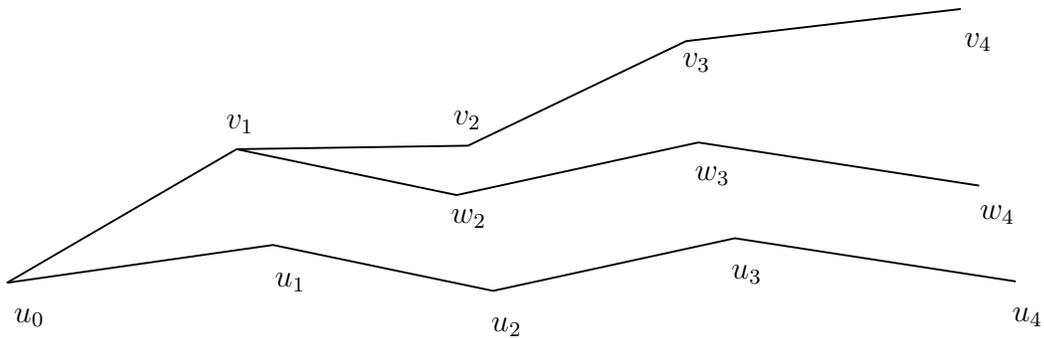}
\caption{Proof of arm lemma.} \label{arml}
\end{figure}

\begin{proof}
{\rm We will prove this lemma by induction on $m$. The case $m=1$ is elementary plane geometry.

Suppose it holds for $m=\nu$ and take $m = \nu +1$. Construct another sequence $\{ w_1, w_2, \ldots, w_m   \}$ such that $w_1 = v_1$ and $u_{j+1}u_jw_jw_{j+1} $ is a parallelogram for $j \in \{ 1, 2, \ldots , m-1\} $. Applying the induction hypothesis to $\{ w_1, w_2, \ldots w_m   \}$ and $\{v_1, v_2, \ldots, v_m  \}$, we see that they do not cross and $arg( v_{m} - w_m) \in \left(  \frac{4 \pi}{10} , \frac{6\pi}{10}  \right)$. Also, $arg(w_m - u_m )= arg (w_1 - u_1) = arg (v_1 - u_1) \in \left(  \frac{4 \pi}{10} , \frac{6\pi}{10}  \right)$, which implies $arg(v_m - u_m)\in \left(  \frac{4 \pi}{10} , \frac{6\pi}{10}  \right)$.}
\end{proof}


\begin{lema}\label{Argument}
{Let $S$ be a flat surface homeomorphic to a closed disc. Consider a map $f \colon S \rightarrow \mathbb{R}^2$ such that restricted to the interior of $S$ is an isometric immersion. Then $f$ is injective if and only if $f(\partial S)$ does not self intersect. 
}
\end{lema}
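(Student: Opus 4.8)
The plan is to prove the two implications separately, with all the content living in the nontrivial direction. The forward implication is immediate: if $f$ is injective then in particular $f|_{\partial S}$ is injective, so $f(\partial S)$ is a simple (non-self-intersecting) closed curve. For the converse I would assume that $J := f(\partial S)$ is a Jordan curve and deduce global injectivity. First I would record the structural facts I get for free: $S$ is compact, $f$ is continuous on all of $S$, and since $f$ restricted to $\mathrm{int}(S)$ is an isometric immersion it is there a local homeomorphism, hence an open map, and (fixing an orientation on the flat disc) orientation preserving. For any $p \notin J$ one has $f^{-1}(p) \cap \partial S = \emptyset$, and a preimage sequence cannot accumulate at $\partial S$ without forcing $p \in J$; combined with discreteness of fibers and compactness of $S$, this shows $f^{-1}(p)$ is a finite subset of $\mathrm{int}(S)$. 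Thus $n(p) := \# f^{-1}(p)$ is a well-defined integer on $\R^2 \setminus J$.

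The heart of the argument (and the reason for the lemma's name) is the argument principle, which I would establish in two steps. First, $n$ is locally constant on $\R^2 \setminus J$: around each of the finitely many preimages $x_1,\dots,x_k$ of $p$ choose disjoint open sets $V_i$ on which $f$ is a homeomorphism onto a neighborhood of $p$; then $f\bigl(S \setminus \bigcup_i V_i\bigr)$ is compact and misses $p$, so a small connected neighborhood $W \ni p$ disjoint from $J$ has all its fibers meeting only the $V_i$, each exactly once, giving $n \equiv k$ on $W$. Second, because $f$ is an orientation-preserving local homeomorphism, every preimage contributes $+1$ to the Brouwer degree, so $n(p)$ equals the signed count of preimages, which in turn equals the winding number $w(p)$ of the closed boundary curve $f|_{\partial S}$ about $p$.

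With this identity in hand the evaluation is clean. By the Jordan curve theorem $\R^2 \setminus J = \Omega \sqcup \Omega'$ with $\Omega$ bounded and $\Omega'$ unbounded. Taking $p \in \Omega'$ far away (outside the compact set $f(S)$) gives $w(p)=0$, hence $n \equiv 0$ on $\Omega'$; since $f(\mathrm{int}\,S)$ is open, is disjoint from $\Omega'$, and lies in $\overline{\Omega}$ whose topological interior is $\Omega$, we conclude $f(\mathrm{int}\,S) \subseteq \Omega$. For $p \in \Omega$ the winding number of a simple closed curve is $\pm 1$, and with the chosen orientation $w(p)=1$, so $n \equiv 1$ on $\Omega$. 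Therefore $f|_{\mathrm{int}\,S} \colon \mathrm{int}\,S \to \Omega$ is a bijection; together with $f|_{\partial S}$ injective onto $J$ and $\Omega \cap J = \emptyset$, this yields global injectivity of $f$.

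\textbf{Main obstacle.} The delicate step is the passage from the local homeomorphism property to the global count $n(p)=w(p)$: one must rule out preimages escaping to the boundary (handled by the compactness argument showing fibers over $p\notin J$ stay in $\mathrm{int}\,S$) and one must correctly match the orientation conventions so that each sheet counts with sign $+1$ and the winding number of $J$ comes out $+1$ rather than $-1$. Once the degree-equals-winding-number identity is secured, the remaining topology (Jordan curve theorem and the openness of $f$) is routine.
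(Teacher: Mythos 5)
Your proof is correct, and its skeleton coincides with the paper's: both reduce the nontrivial implication to the identity (number of preimages of $p$) $=$ (winding number of $f(\partial S)$ about $p$) for $p \notin f(\partial S)$, and then evaluate that winding number via the Jordan curve theorem. The difference lies in how the identity is justified. The paper notes that local isometries are conformal and cites the complex-analytic argument principle (Marsden--Hoffman, p.~384); you instead prove it topologically, showing that fibers over points off $J$ are finite and interior, that the counting function $n$ is locally constant, and that each preimage contributes $+1$ to the Brouwer degree, which equals the winding number of the boundary curve. Your route buys three things: it is self-contained modulo standard degree theory; it needs no conformality at all (it works for any orientation-preserving local homeomorphism, and in particular avoids the mild abuse of applying a theorem about analytic functions on planar domains to a map defined on an abstract flat disc); and it fixes the orientation normalization explicitly --- the paper's assertion that the winding number ``equals 0 or 1'' silently assumes the orientation making the count non-negative, which your observation $n(p) = w(p) \geq 0$ pins down. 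What the paper's route buys is brevity: two sentences and a citation. Note that both arguments rest on the same nontrivial topological input beyond the argument principle, namely the Jordan curve theorem together with the addendum that a simple closed curve has winding number $\pm 1$ about points of its bounded complementary component and $0$ about the unbounded one; your careful derivation of $f(\mathrm{int}\, S) \subseteq \Omega$ from openness of the image is a detail the paper leaves implicit.
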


\begin{proof}
{\rm One implication is trivial. Since local isometries are conformal maps, the other one follows from the fact that for conformal maps $f \colon S \rightarrow \mathbb{R}^2$, the number of preimages $f^{-1}(x)$ equals the winding number $I(f(\partial S), x )$ for all $x \in \R^2  \backslash f(\partial S)$, which is a standard fact in complex analysis (\cite{Ma}, p. 384). If $f(\partial S)$ does not self intersect, by the Jordan Curve Theorem \cite{Ha} the winding number $I(f(\partial S), x )$ equals 0 or 1, then the function is injective.
}
\end{proof}

\section*{The construction}

To obtain a net one has to cut a convex polyhedron $P$ along a spanning tree $T$ of the graph of its edges. This way we obtain a flat surface $P_T$, which is homeomorphic to a closed disc. The surface $P_T$ can be mapped isometrically face by face into the plane in an essentially unique way; denote this map by $f_T\colon P_T\to \mathbb{R}^2$.

If $f_T$ is injective then the image $f_T(P_T)$ is a net of $P$. However $f_T$ might not be injective as figure \ref{Tet} shows. Therefore Shephard's problem is asking if for any convex polyhedron $P$ there is a spanning tree $T$ such that $f_T$ is injective.

Since the number of edges of $P$ is finite, we can rotate it so that none of its edges is orthogonal to the $x$ axis. Then, after a suitable stretching in the direction of the $x$ axis, all the edges of the obtained polyhedron $Q$  form a sufficiently small angle with the $x$ axis (less than $\frac{\pi}{20 N}$ will do, where $N$ is the number of edges of $P$).

We now define an ordering on the set of vertices of $Q$. We say that $v \leq v^{\prime}$ if the first coordinate of $v$ is less or equal than the first coordinate of $v^{\prime}$. We will denote by $y$ and $z$ the minimal and maximal vertices respectively.

We define an \textit{ascending sequence} as a set of vertices $\{ p_0, p_1, \ldots , p_n   \}$ such that $p_i \leq p_{i+1}$ and $p_i p_{i+1}$ are connected by an edge for all $i \in \{ 0,1, \ldots, n-1 \} $. We say that a spanning tree $T$ with root $z$ is \textit{increasing} if for any vertex $v\in Q$ there is a (unique) ascending sequence from $v$ to $z$ contained in $T$. 

Note that all terminal edges of an increasing tree form an angle close to $\pi$ with $e_1$. Otherwise, the path in $T$ joining the corresponding leaf to $z$ wouldn't be an ascending sequence (see figure \ref{Mon}). Also, all the ends of the surface $Q \backslash T$ point in the direction of the $x$ axis.

\begin{figure}[h]
\centering
\includegraphics[scale=0.84]{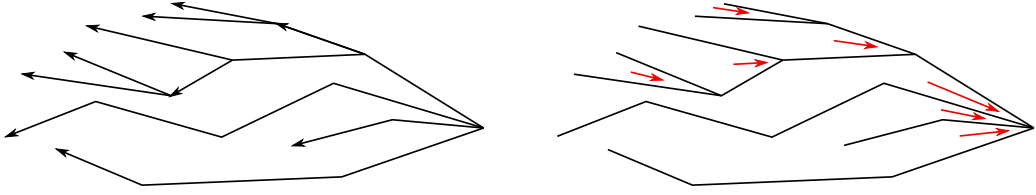}
\caption{Terminal edges of an increasing tree $T$ point leftwards and ends of $Q \backslash T$ point rightwards.}\label{Mon}
\end{figure}

The following theorem clearly implies theorem \ref{Str}.


\begin{teorema}\label{Gho}
{\textbf{(Ghomi)} If $Q$ is cut along any increasing tree $T$, then the unfolding map $f_T$ is injective.}
\end{teorema}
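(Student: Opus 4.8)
The plan is to strip away the three-dimensional geometry and reduce Theorem \ref{Gho} to a single planar statement, which the arm lemma is tailor-made to settle. First I would apply Lemma \ref{Argument} to the disc $S=P_T$: the unfolding $f_T$ is an isometric immersion on the interior of $P_T$, so $f_T$ is injective precisely when the boundary curve $f_T(\partial P_T)$ has no self-intersection. Thus it suffices to exhibit $f_T(\partial P_T)$ as a simple closed polygon, and from here on the argument never leaves the plane.

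Next I would read off the combinatorial shape of $\partial P_T$. Cutting $Q$ along the spanning tree $T$ splits each of its edges into two banks, so $f_T(\partial P_T)$ is a closed polygon whose segments are the two unfolded copies of each tree edge, met in the cyclic order of a depth-first walk around $T$. Here the stretching enters: the angle bound $\pi/(20N)$ on the edges of $Q$ is chosen precisely so that the face-by-face unfolding cannot rotate any boundary segment beyond $\pi/10$ from the horizontal. In particular no boundary segment is vertical, the polygon has a well-defined leftmost copy of $y$ and rightmost copy of $z$, and the increasing structure of $T$ (ascending sequences to the root, terminal edges pointing left, ends of $Q\setminus T$ pointing right) organizes the banks into almost-horizontal, $x$-monotone broken lines emanating from common ancestor vertices.

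With this set-up the arm lemma does the work. A single depth-first excursion out to a leaf and back traverses one branch of $T$ twice, so its two banks give two broken lines that start at the same vertex copy and carry the same link lengths in opposite order; I would match them link by link and verify the four hypotheses of Lemma \ref{Arm}. Equal starting point and equal lengths are automatic, the almost-horizontal hypothesis is the output of the stretching, and the argument-domination hypothesis $arg(v_i-v_{i-1})\ge arg(u_i-u_{i-1})$ comes from convexity in the form of (\ref{Alex}): rounding each interior vertex opens the cut by precisely its angle deficit, pushing one bank's links to higher argument than the other's. The arm lemma then gives at once that the two banks do not cross and that the tip of the excursion is reached by an almost-vertical displacement, so each excursion is embedded and protrudes in a single consistent direction. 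I expect the principal difficulty to be the passage from these per-excursion statements to global simplicity of $f_T(\partial P_T)$: one must check that the ascending-sequence orientation always makes (\ref{Alex}) yield the domination in the correct direction, and, more seriously, that distinct excursions hanging off different branch vertices cannot collide even though the arm lemma controls each one only internally. Once that is in hand, $f_T(\partial P_T)$ is a simple closed curve, and the first step finishes the proof of Theorem \ref{Gho}.
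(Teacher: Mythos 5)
Your reduction via Lemma \ref{Argument} and your choice of the arm lemma as the local tool are exactly the paper's starting points, and your verification scheme for the domination hypothesis (domination at the first link plus the convexity inequality (\ref{Alex}) at the interior vertices of the cut) is the same observation the paper makes inside its Proposition \ref{Zig}. (Incidentally, the relevant surface is $Q_T$, not $P_T$: for $P$ itself the boundary image can self-intersect, which is the whole point of the stretching.) But the proposal stops precisely where the proof begins: you identify the passage from per-excursion statements to global simplicity of $f_T(\partial Q_T)$ as ``the principal difficulty'' and then assume it (``Once that is in hand\dots''). That passage \emph{is} the theorem. Pairwise applications of the arm lemma to the two banks of a single branch cannot rule out collisions between excursions hanging off different parts of the tree: the arm lemma only compares two chains with a common starting point and matched link lengths, and distinct excursions share neither, so no accumulation of per-excursion embeddedness yields a simple closed boundary curve.

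The paper closes this gap with two ideas absent from your outline. First, it decomposes $f_T(\partial Q_T)$ not into tree excursions but into maximal almost-horizontal broken lines $R_1 L_1 R_2 \ldots$ going alternately rightwards and leftwards (with controlled turning directions because $T$ is increasing), and proves by induction (Proposition \ref{Zig}) that a first self-intersection can never occur while going rightwards. The induction step needs a device you do not have: when $R_2$ is shorter than $L_1$, one extends $R_2$ by a broken line $S$ parallel to the unmatched portion of $L_1$ (this is where the pairing of dual edges is used), places $S$ above $L_1$ by the arm lemma, and only then invokes the induction hypothesis on the shortened sequence. Second, having confined any first self-intersection $y''$ to a leftward-going piece, the paper runs a topological argument: since all ends of $Q_T$ point rightwards, the simple loop $\gamma \subset f_T(\partial Q_T)$ based at $y''$ can be contracted to $y''$ moving leftwards the whole time, the contraction lifts to $Q_T$, and hence only one point of $\partial Q_T$ maps to $y''$; this forces $y'' = y'$, i.e.\ injectivity of $f_T$ on the boundary, after which Lemma \ref{Argument} finishes. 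Without analogues of both steps --- the alternating left/right induction with the parallel extension, and the leftward contraction argument --- your proposal is a plan rather than a proof.
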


First, choose an unfolding $f_T$ of $Q_T$ in which the images of all the edges are almost horizontal (they form an angle less than $\frac{\pi}{10}$ with $e_1 \in \R ^2$). Next, we are going to prove that $f_T(\partial Q_T)$ does not self intersect.

Consider $y^{\prime} = f_T(y) \in \R ^2$. Then, starting at $y^{\prime}$, the counterclockwise image of $\partial Q_T$ is a sequence of piecewise linear curves (broken lines) going almost horizontally rightwards and leftwards, alternately. We are going to denote the first broken line going right as $R_1$, the first one going left as $L_1$, and so on. Since $T$ is ascending, when we switch from going rightwards to leftwards, we rotate counterclockwise and when we switch from going leftwards to rightwards, we rotate clockwise (see figure \ref{Cro}).

\begin{figure}[h]
\centering
\psfrag{A}{$R_1$}
\psfrag{B}{$L_1$}
\psfrag{C}{$R_2$}
\psfrag{D}{$L_2$}
\psfrag{E}{$R_3$}
\psfrag{F}{$L_3$}
\psfrag{G}{$R_4$}
\includegraphics[scale=0.7]{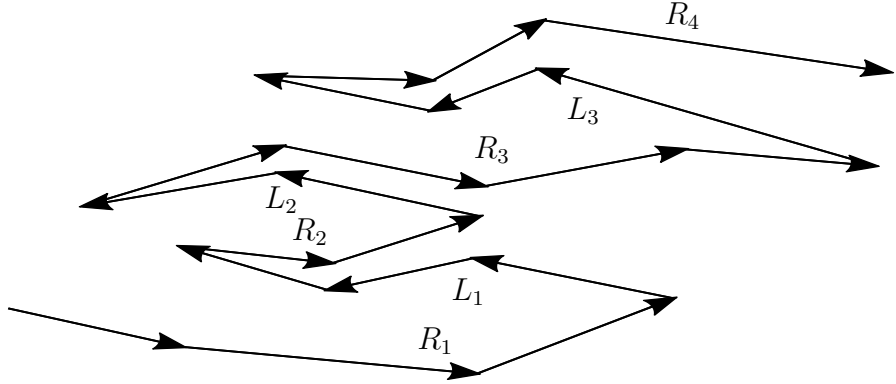}
\caption{The image of the boundary is a sequence of broken lines going rightwards and leftwards alternately.}\label{Cro}
\end{figure}

\begin{prop}\label{Zig}
{As defined above, if the sequence of broken lines $R_1L_1R_2\ldots$ 
$ L_n$ doesn't self intersect, then $R_1L_1 R_2 \ldots L_nR_{n+1}$ doesn't self intersect either.  

}
\end{prop}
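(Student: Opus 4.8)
The plan is to argue directly that the single new run $R_{n+1}$ meets neither itself nor the earlier chain. That $R_{n+1}$ is simple is immediate: all its edges are almost horizontal and point rightwards, so the first coordinate increases strictly along $R_{n+1}$ and the run is a graph over the $x$-axis. Hence everything reduces to showing that $R_{n+1}$ is disjoint from $R_1L_1\cdots L_n$.

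The crucial point is that $R_{n+1}$ does not climb an arbitrary path: since $T$ is increasing, its starting point $V_n$ (the end of $L_n$) is a leaf of $T$, and the ascending run $R_{n+1}$ reverse-retraces precisely the edges that the boundary had last descended, beginning with the final edge of $L_n$. Thus $R_{n+1}$ and a final portion of $L_n$ are two developments of one and the same path in $T$, so their successive edge lengths agree; moreover, because $V_n$ is a leaf its two banks meet, so the two developments issue from the common point $f_T(V_n)$. This is exactly the input required by the Arm Lemma (Lemma \ref{Arm}): I would take $u_0=v_0=f_T(V_n)$ and let $u_\bullet$ trace $R_{n+1}$ while $v_\bullet$ traces $L_n$ reversed, the equal-length and almost-horizontal hypotheses holding by construction.

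It remains to verify the monotonicity hypothesis $arg(v_i-v_{i-1})\ge arg(u_i-u_{i-1})$, and this is where I expect the real work to lie. The two arms develop the same intrinsic path but with the faces of $Q$ glued on opposite sides, so at each shared vertex they differ by a rotation controlled by the cone angle of $Q$ there. The clockwise turn at the valley $V_n$ fixes the sign of this discrepancy on the first edge, and convexity --- in the form of inequality~(\ref{Alex}), $\measuredangle bac+\measuredangle cab<2\pi$ --- should propagate it to every later vertex, forcing one arm to turn consistently more counterclockwise than the other; the almost-horizontal condition is precisely what prevents these argument comparisons from wrapping past $\pm\pi$. Granting this, Lemma \ref{Arm} yields at once that $R_{n+1}$ does not cross $L_n$ and that it stays almost vertically to one side of it.

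Finally I would exclude crossings of $R_{n+1}$ with the earlier runs $R_1L_1\cdots R_n$. By the inductive hypothesis this part of the chain is a simple arc, and the almost-vertical separation just obtained places $R_{n+1}$ on the far side of $L_n$ from that arc; together with the strict $x$-monotonicity of $R_{n+1}$ this should confine the new run to a region that the earlier runs cannot enter. Turning this confinement into a contradiction is a Jordan-curve argument in the spirit of Lemma \ref{Argument}, and it, together with the argument inequality above, is the part I expect to require the most care --- especially when $R_{n+1}$ climbs above the peak at which $L_n$ began and so must be compared against descents laid down earlier than $L_n$.
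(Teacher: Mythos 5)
Your first two steps are sound, and they match the mechanism the paper itself uses: the valley ending $L_n$ is indeed a leaf of $T$, the new run climbs dual edges of the cut, and the monotonicity hypothesis of the Arm Lemma does follow from the initial clockwise turn at the valley together with convexity, i.e.\ inequality~(\ref{Alex}) --- this is precisely the observation the paper makes to settle its base case $n=1$. The genuine gap is your final paragraph, and it sits exactly where the whole difficulty of the proposition lives. First, your Arm Lemma pairing presumes that $R_{n+1}$ develops \emph{a final portion of} $L_n$, so that the edge lengths match up. But $R_{n+1}$ need not be shorter than $L_n$: it can retrace the dual of all of $L_n$ and keep climbing, along duals of edges that were laid down in earlier descents $L_{n-1}, L_{n-2}, \ldots$; in that case the two arms do not have matching lengths and the lemma does not apply as stated. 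Second, the claimed confinement --- that the almost-vertical separation places $R_{n+1}$ on the far side of $L_n$ from the earlier arc --- is unjustified: the earlier chain $R_1L_1\cdots R_n$ is not contained in one side of $L_n$ (the runs nest and spiral, as in figure~\ref{Cro}), and precisely in the problematic case just described $R_{n+1}$ climbs past the peak at which $L_n$ began, into the region occupied by $R_n$ and older runs, where the comparison with $L_n$ gives no information. You flag this case yourself, but flagging it is not resolving it; no mechanism in your sketch addresses it.

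The paper's proof is organized specifically to avoid this head-on collision: it inducts from the \emph{front} of the chain rather than the back. It pairs $L_1$ with $R_2$ (the first valley is also a leaf) and splits into two cases. If $R_2$ is at least as long as $L_1$, it discards $R_1L_1$ and applies the induction hypothesis to the chain $R_2L_2R_3\cdots L_nR_{n+1}$, which has one fewer leftward run. If $R_2$ is shorter than $L_1$, it extends $R_2$ by an auxiliary broken line $S$ parallel to the unmatched portion of $L_1$ (figure~\ref{S}), so that $R_2S$ is a full dual development of $L_1$; the Arm Lemma places $S$ above $L_1$, the induction hypothesis keeps $S$ away from $L_2R_3\cdots L_nR_{n+1}$, and $S$ then acts as a barrier separating $R_1L_1$ from everything that comes after --- in particular from $R_{n+1}$, with no direct comparison between $R_{n+1}$ and distant runs ever needed. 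This front-end induction with the phantom extension $S$ is the device your proposal is missing; without it, or an equivalent idea, the Jordan-curve argument you appeal to at the end does not get off the ground.
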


\begin{proof}
The proof goes by induction on the number $n$ of times it has gone leftwards (in figure \ref{Cro}, $n=3$). For the case $n=1$, observe that the condition $arg (v_{i} - v_{i-1} )\geq arg (u_{i} - u_{i-1})$ for $i =1, 2, \ldots, m$ in the arm lemma is implied by $arg (v_1 - v_0 )\geq arg (u_1 - u_0)$ and $\angle v_{i+1}v_iv_{i-1}+ \angle u_{i-1}u_iu_{i+1} \leq 2\pi$ for $i\in \{ 1,2, \ldots, m-1\}$. Therefore the arm lemma completes the base of induction.

Suppose the assertion is true for $n \leq k$ and consider the case $n=k+1$.  Note that the edges of $\partial Q_T$ are paired in such a way that we glue paired edges together to obtain $Q$ from $Q_T$. Each one will be called the \textit{dual} of the other.

Observe that when we start $R_2$, we are traveling the dual edges of the leftmost part of $L_1$. If the length of $R_2$ is greater than or equal to the length of $L_1$, we can apply the induction hypothesis to $R_2L_2R_3 \ldots L_nR_{n+1}$ and the result will follow. 

If the length of $R_2$ is less than the length of $L_1$, we extend $R_2 $ with a broken line $S$ parallel to the portion of $L_1$ without the dual of $R_2$ (see figure \ref{S}). By the arm lemma, $S$ will be above $L_1$, and by the induction hypothesis $S$ does not touch $L_2R_3 \ldots L_nR_{n+1}$.
\end{proof}

\begin{figure}[h]
\centering
\includegraphics[scale=0.7]{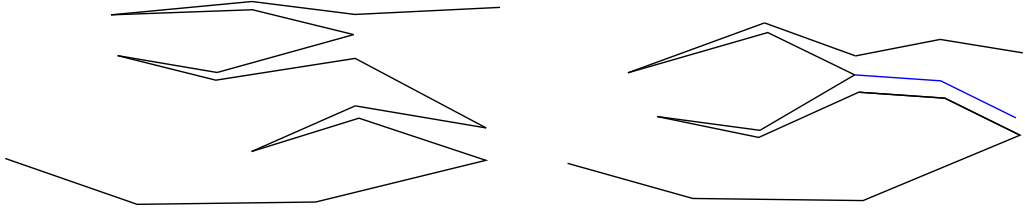}
\caption{On the left $R_2$ and $L_1$ have the same length, on the right we construct $S$ in blue.} \label{S}
\end{figure}

Now, the image of $\partial  Q_T$ will self intersect for the first time in a point $y^{\prime\prime}$ while going leftwards. Because $T$ is increasing, $f_T(\partial Q_T)$ contains a simple closed curve $\gamma$ starting at $y^{\prime \prime}$ consisting of a sequence of broken lines turning clockwise when changing from going leftwards to rightwards and counterclockwise in the other case.

\begin{figure}[h]
\centering
\psfrag{A}{$y^{\prime}$}
\psfrag{B}{$y^{\prime\prime}$}
\includegraphics[scale=0.7]{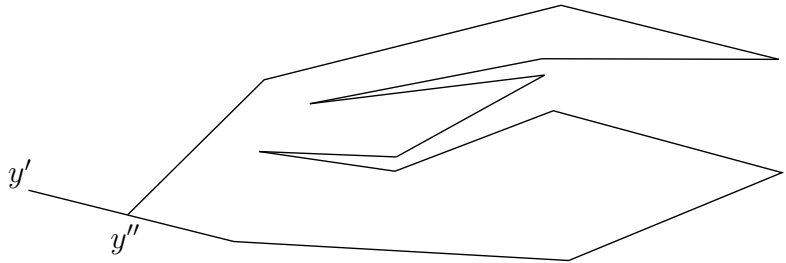}
\caption{The image of $\partial Q_T$ first self intersects at $y^{\prime \prime}$.}
\end{figure}

Since all ends point rightwards, we can contract the curve $\gamma$ to the point  $y^{\prime \prime}$ moving leftwards all the time. Such contraction can be performed in the same way in $Q_T$. Therefore only one point of $\partial Q _T$ is sent to $y^{\prime \prime}$. This is only possible if $y^{\prime} = y^{\prime \prime}$ and $f_T$ restricted to $\partial Q_T$ is injective.

Applying lemma \ref{Argument} with $S=Q_T$ we get that $f_T$ is a net, completing the proof.

As an obvious consequence of the proof we get the following result.

\begin{teorema}\label{Gho}
{\textbf{(Ghomi)} For any convex polyhedron, a sufficiently long stretch in almost any direction yields a polyhedron with a net.}
\end{teorema}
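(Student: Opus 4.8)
The plan is to note that the construction above used the $x$-axis only through two features, and that both can be transferred to almost any direction. First, no edge of $P$ is orthogonal to the axis, which is what makes the vertex ordering by first coordinate compatible with the edges; second, after stretching every edge forms an angle less than $\frac{\pi}{20N}$ with the axis. I would fix any unit vector $d$ having the first property and show that a sufficiently long stretch along $d$ forces the second; the conclusion then follows from the argument already given, now run with $d$ in place of $e_1$. The rotation performed at the start of the construction is precisely the change of coordinates sending such a $d$ to $e_1$, so nothing is lost by working with $d$ directly.

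For the quantitative step, let $L_\lambda$ be the linear stretching by a factor $\lambda \geq 1$ in the direction $d$, fixing the plane $d^{\perp}$ and scaling the $d$-component by $\lambda$. Writing an edge vector as $e = e_{\parallel} + e_{\perp}$ with $e_{\parallel}$ parallel to $d$ and $e_{\perp} \in d^{\perp}$, we have $L_\lambda e = \lambda e_{\parallel} + e_{\perp}$. If $d$ is not orthogonal to $e$ then $e_{\parallel} \neq 0$, so the angle between $L_\lambda e$ and the axis spanned by $d$ equals $\arctan\left( \frac{|e_{\perp}|}{\lambda\, |e_{\parallel}|} \right)$, which tends to $0$ as $\lambda$ grows. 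Since $P$ has only finitely many edges, I can pick a single $\lambda$ large enough that every edge of $L_\lambda(P)$ makes an angle less than $\frac{\pi}{20N}$ with $d$.

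To see that this applies to almost every direction, observe that the directions failing the first property are exactly the unit vectors orthogonal to some edge of $P$; for each edge these fill out a great circle on the sphere of directions, so the whole exceptional set is a finite union of great circles, a closed set of measure zero. Thus for almost every $d$ a long enough stretch produces $Q = L_\lambda(P)$ with all edges nearly parallel to $d$. Ordering the vertices of $Q$ by their $d$-coordinate and cutting along any increasing tree, the injectivity argument for $f_T$ goes through verbatim, so $Q$ has a net.

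I expect the only delicate point to be the uniformity of the stretch: $\lambda$ must dominate the edge with the largest ratio $|e_{\perp}|/|e_{\parallel}|$, which is possible exactly because there are finitely many edges and $d$ avoids the exceptional great circles. Beyond this bookkeeping, no genuinely new difficulty arises, which is why the statement is an immediate corollary of the construction rather than a separate argument.
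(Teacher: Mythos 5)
Your proposal is correct and matches the paper's intent exactly: the paper offers no separate argument, asserting the result as an obvious consequence of the construction, and the details you supply (the exceptional directions being those orthogonal to some edge, a finite union of great circles of measure zero, and the angle $\arctan\bigl(|e_{\perp}|/(\lambda\,|e_{\parallel}|)\bigr)\to 0$ under stretching) are precisely the ones the paper leaves implicit. Nothing in your route differs from the paper's; you have simply written out the omitted bookkeeping.
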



\begin{thebibliography}{9}

\bibitem{Du} A. D\"{u}rer \textit{The painter's manual.} Literary remains of Albrecht D\"{u}rer. Abaris Books. 1977.

\bibitem{Gh} M. Ghomi, \textit{Affine unfoldings of convex polyhedra.} Geom. Topol., 18(5):3055-3090, 2014. 
  
  
  \bibitem{Ha}
  T. Hales, \textit{The Jordan curve theorem, formally and informally.} The American Mathematical Monthly, 114 (10): 882-894. 2007.

\bibitem{Ma}
  J. Marsden, M. Hoffman,  \textit{Basic complex analysis.} Third Edition, W.H. Freeman, NY. 1999.  
  

\bibitem{Sh} 
G. C. Shephard, \textit{Convex polytopes with convex nets.} Math. Proc. Cambridge Philos. Soc., 78(3):389-403, 1975. 
  
\end{thebibliography}
\end{document}